
\documentclass{daj}

\dajAUTHORdetails{%
  title = {Slice Rank and Analytic Rank for Trilinear Forms}, 
  author = {Amichai Lampert},
  plaintextauthor = {Amichai Lampert},
    %
    %
    %
    %
    %
   %
}   

\dajEDITORdetails{%
   year={2025},
   number={25},
   received={13 February 2025},   
   published={9 October 2025},  
   doi={10.19086/da.145198},       
}   

\usepackage{amscd,amssymb,amsmath,amsfonts,amsthm}
\usepackage{comment,hyperref}
\usepackage{xcolor}

\newtheorem{theorem}{Theorem}[section]
\newtheorem{lemma}[theorem]{Lemma}

\newtheorem{definition}[theorem]{Definition}

\newtheorem{question}[theorem]{Question}
\newtheorem{conjecture}[theorem]{Conjecture}

\renewcommand{\k}{{\mathbf{k}}}

\newcommand{\E}{\mathop{\mathbb{E}}}
\newcommand{\F}{\mathbb{F}}

\renewcommand{\P}{\mathop{\mathbb{P}}}

\newcommand{\codim}{\textnormal{codim}}
\newcommand{\rk}{\textnormal{rk}}
\newcommand{\ark}{\textnormal{ark}}
\newcommand{\prk}{\textnormal{prk}}
\newcommand{\srk}{\textnormal{srk}}

\begin{document}

\begin{frontmatter}[classification=text]


\author[AL]{Amichai Lampert\thanks{This work was supported by NSF grant DMS-1840234}}

\begin{abstract}
In this note, we present an elementary proof of the fact that the slice rank of a trilinear form over a finite field is bounded above by a linear expression in the analytic rank. The existing proofs by Adiprasito-Kazhdan-Ziegler and Cohen-Moshkovitz both rely on results of Derksen via geometric invariant theory. A novel feature of our proof is that the linear forms appearing in the slice rank decomposition are obtained from the trilinear form by fixing coordinates.
\end{abstract}
\end{frontmatter}

\section{The theorem}
Let $\k = \F_q$ be a finite field and let $U,V,W$ be finite dimensional vector spaces over $\k.$  
\begin{definition}\label{rk-def} 
For a trilinear form $f:U\times V\times W\to\k$ we are interested in two kinds of rank.
\begin{itemize}
    \item The \emph{slice rank} of $f$ is 
    \[
    \srk(f) = \min\left\{r: f = \sum_{i=1}^r \alpha_i\cdot h_i\right\},
    \]
    where $\alpha_i,h_i$ are linear and bilinear, respectively, in disjoint sets of variables.
    \item The \emph{analytic rank} of $f$ is 
    \[
    \ark(f) = -\log_q \frac{|Z|}{|U\times V|},
    \]
    where $Z = \{(u,v)\in U\times V: f(u,v,\cdot) \equiv 0\}.$
\end{itemize}
    
\end{definition}

The definition of slice rank goes back to the work of Schmidt \cite{S} on systems of polynomial equations. It was reidiscovered in \cite{T} and used to give a reformulation of Ellenberg and Gijswijt's work on the capset problem. Analytic rank was introduced in \cite{GW}. The inequality $\ark(f)\le \srk(f)$ is straightforward, see \cite{KZ} or \cite{L}. In the other direction, Adiprasito-Kazhdan-Ziegler \cite{AKZ} and Cohen-Moshkovitz \cite{CM-cubics} independently proved that $\srk(f) \ll \ark(f).$ Their proofs use results of Derksen \cite{D} which rely on the powerful tools of geometric invariant theory. The goal of this note is to give an elementary proof.

\begin{theorem}\label{main} For any finite field $\k =\F_q$ and trilinear form $f,$ we have
$$\srk(f) \le 5\cdot \ark(f)+4\cdot \log_q(\ark(f)+1)+29.$$
In addition, the linear forms in the corresponding slice rank decomposition are obtained from $f$ by fixing coordinates.
\end{theorem}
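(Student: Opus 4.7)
Set $r=\ark(f)$. The plan is an iterative rank-drop: peel off at each stage a bounded number of slices whose linear parts are fix-coord, so that the analytic rank of the remainder decreases by at least one. Iterating $r$ times reduces $f$ to zero and bounds $\srk(f)$ by the total number of slices extracted.

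The engine is the character-sum identity
\[
q^{-\ark(f)}\;=\;\mathbb{E}_{u,v,w}\,\chi\bigl(f(u,v,w)\bigr),
\]
from which, freezing $w$ and using that the analytic rank of a bilinear form equals its matrix rank, one derives the symmetric identity
\[
\mathbb{E}_w\,q^{-\operatorname{rank}(g_w)}\;=\;q^{-r},\qquad g_w(u,v):=f(u,v,w),
\]
together with its analogues in $u$ and $v$. A Markov argument on this identity produces $w_0\in W$ with $\operatorname{rank}(g_{w_0})\le r+O(1)$, and a bilinear rank decomposition of $g_{w_0}$ exhibits fix-coord linear forms $f(\cdot,v_j,w_0)\in U^*$ and $f(u_i,\cdot,w_0)\in V^*$ of precisely the kind allowed by the theorem.

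The central step is the rank-drop lemma: from such a $w_0$, extract a constant number (about $5$) of slices with fix-coord linear parts so that the remainder $f'$ satisfies $\ark(f')\le r-1$. The natural strategy is to first absorb the rank decomposition of $g_{w_0}$ into $u$- and $v$-type slices, producing a remainder that vanishes on $U\times V\times\langle w_0\rangle$ and hence factors through $U\times V\times (W/\langle w_0\rangle)$, then add a further $w$-type slice (with linear form $f(u_i,v_i,\cdot)$) to ensure the analytic rank and not merely the $W$-dimension actually drops. The correction $4\log_q(r+1)$ comes from the tail of the Markov step in a logarithmic refinement, and the additive $29$ handles the small-dimensional base case, absorbed by the trivial bound $\srk(f)\le\min(\dim U,\dim V,\dim W)$.

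The main obstacle is forcing the genuine drop in $\ark$. Subadditivity of $\ark$ yields only $\ark(f')\ge r-(\text{number of slices})$, which is the opposite of what we want: arbitrary slice removal need not help. To guarantee a drop, one must select the slices so that the sum matches $f$ on a substantial portion of the domain, producing cancellation in $\mathbb{E}\,\chi(f-\sum\alpha_ih_i)$, while simultaneously constraining every $\alpha_i$ to be of fix-coord form. Carrying this out with only $O(1)$ slices per unit drop in $\ark$ is the technical heart of the argument, and is precisely what replaces the GIT machinery of Derksen used in the earlier proofs of Adiprasito--Kazhdan--Ziegler and Cohen--Moshkovitz.
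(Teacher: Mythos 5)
Your plan is genuinely different from the paper's, and it has a real gap at its centre that you yourself flag: the ``rank-drop lemma'' (remove $O(1)$ fix-coord slices and force $\ark$ to drop by one) is not proved, and in fact nothing in your sketch closes it. The only tool you bring to bear is a $w_0$ with $\rk g_{w_0}\le r$, but absorbing $g_{w_0}$ via $u$- and $v$-type slices costs $\approx r$ slices (its rank), not $O(1)$; and replacing it by the single $w$-type slice $g_{w_0}(u,v)\cdot\ell(w)$ with $\ell$ dual to $w_0$ makes $\ell$ a dual-basis element rather than a fix-coord form $f(u_i,v_i,\cdot)$, violating the constraint you imposed. Subadditivity, as you note, runs the wrong way. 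There is no argument here that a bounded number of suitably chosen slices must produce cancellation in $\E\,\chi(f-\sum\alpha_ih_i)$, and I don't believe such a statement is known; proving it with constant cost per unit drop would by itself be a substantial result. So the proposal names the obstacle correctly and then stops exactly where the work begins.

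The paper avoids iteration entirely. Its first moment identity is the {\emph conditional} one $\E_{(u,v)\in Z}\,q^{\rk f[u]}=q^r$ (conditioning on the zero set $Z$, not the unconditional $\E_w q^{-\rk g_w}=q^{-r}$ you write), from which one finds a subspace $U'=Z(v_0)$ of codimension $\le r+1$, cut out by the fix-coord forms $f(\cdot,v_0,w)$, on which $\P_{u\in U'}(\rk f[u]>r+s)<q^{1-s}$. It then applies the Haramaty--Shpilka lemma in one shot: if $\P_u(\rk g[u]>t)<\tfrac{q-1}{2qt}$ then $\srk g\le 4t$, proved by a halving induction on $t$ that restricts to the joint kernel of a maximal-rank slice $g[u_0]$. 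That lemma is the engine replacing GIT, not a rank-drop argument. If you want to pursue your route, you must either prove the rank-drop lemma (hard and unproved here) or switch to a subspace-restriction-plus-density argument like the paper's; at present the proposal is a plan with an explicitly acknowledged hole where the main lemma should be.
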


Theorem \ref{main} is the first nontrivial case of a more general conjecture relating analytic rank to another kind of rank, \emph{partition rank}. To state the general conjecture, we need some definitions. Let $V_1,\ldots,V_d$ be finite-dimensional vector spaces over $\k$ and let $f:V_1\times\ldots\times V_d\to \k$ be a multilinear form.

\begin{definition}
    \begin{itemize}
    \item The \emph{partition rank} of $f$ is 
    \[
    \prk(f) = \min\left\{r: f = \sum_{i=1}^r g_i\cdot h_i\right\},
    \]
    where $g_i,h_i$ are multilinear in disjoint sets of variables. Note that if $d=3$ this agrees with the slice rank.
    \item The \emph{analytic rank} of $f$ is 
    \[
    \ark(f) = -\log_q \frac{|Z|}{|V_1\times\ldots\times V_{d-1}|},
    \]
    where $Z = \{(x_1,\ldots,x_{d-1})\in V_1\times\ldots\times V_{d-1}: f(x_1,\ldots,x_{d-1},\cdot) \equiv 0\}.$
\end{itemize}
    
\end{definition}

Again, the inequality $\ark(f)\le \prk(f)$ is not difficult, see \cite{KZ} and \cite{L}.  Lovett \cite{L} and Adiprasito-Kazhdan-Ziegler \cite{AKZ} conjectured that the reverse inequality also holds, up to a constant factor.

\begin{conjecture}
    There exists a constant $C_d$ such that for any finite field and multilinear form $f$ we have 
    \[
    \prk(f) \le C_d\cdot \ark(f).
    \]
\end{conjecture}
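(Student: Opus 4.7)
The plan is to construct the slice rank decomposition by induction on the analytic rank. Starting with $g_0 = f$, at each step $t$ I extract at most five slices of the form $g_t(\cdot, v_0, w_0) \otimes h(v,w)$ or one of its two symmetric analogs, subtract them from $g_t$, and iterate on the residual $g_{t+1}$. Since each linear factor is an evaluation of $g_t$ at a fixed pair of coordinates, the resulting decomposition has the structure demanded by the theorem. The quantitative target is a rank-drop claim: whenever $\ark(g_t) \geq 1$, the five slices can be chosen so that $\ark(g_{t+1}) \leq \ark(g_t) - 1$. Iterating $\approx \ark(f)$ times drives the analytic rank to zero, forcing $g_t \equiv 0$ and producing the bound $\srk(f) \leq 5\ark(f) + O(\log_q \ark(f))$.

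For the per-step selection I plan to use the averaging identity
\[
q^{-\ark(g_t)} = \mathbb{E}_{u \in U}\bigl[q^{-\operatorname{rank} B_u}\bigr], \qquad B_u(v,w) := g_t(u,v,w),
\]
and its two symmetric counterparts in $V$ and $W$. The identity forces the existence of $u_0 \in U$ with $\operatorname{rank} B_{u_0} \leq \ark(g_t)$; at that point I write $B_{u_0}$ in bilinear rank form and pick $(v_0, w_0)$ in its support so that $\alpha(u) := g_t(u, v_0, w_0)$ is nonzero at $u_0$. Taking $h(v,w) := B_{u_0}(v,w)/\alpha(u_0)$, the residual $g_t - \alpha \otimes h$ vanishes identically on the slice $u = u_0$. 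A parallel choice in each of the three symmetric directions, together with a small ``cleanup'' correction, should yield the five-slice step.

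For small $\ark(g_t)$ the uniform accounting loses an additive logarithmic term, which I plan to handle with a refined endgame: either a direct bound on $\srk(g_t)$ in terms of $\ark(g_t)$ valid in the small regime, or a sharper selection of the peeling points when $\ark(g_t)$ is comparable to $\log_q(\ark(f)+1)$. This produces the additive $4\log_q(\ark(f) + 1) + 29$ overhead.

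The main obstacle is the rank-drop claim itself: converting the combinatorial observation that $g_{t+1}$ vanishes on a slice into a genuine numerical decrease of $\ark$. Crude estimates such as $|\ark(g) - \ark(g-s)| \leq \srk(s)$ point the wrong way, allowing $\ark$ to \emph{increase} by one after subtracting a single slice. The correct argument should use the averaging identity to track how the distribution of $\operatorname{rank} B_u$ over $u \in U$ evolves under the peeling, showing that a positive-density subset of $u$'s (not only $u_0$) inherits a reduced rank --- which is exactly what translates to a decrease in $q^{-\ark}$. The factor $5$, rather than the sharp $3$ of Adiprasito--Kazhdan--Ziegler, likely reflects the quantitative cost of insisting that the linear factors be evaluations of $f$ rather than arbitrary linear forms.
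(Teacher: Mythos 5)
The statement you were given is the paper's \emph{Conjecture}, not a theorem: the paper explicitly poses it as an open problem and records that the best known result is $\prk(f) \le C_d\cdot \ark(f)\cdot \log^{d-1}(1+\ark(f))$ (Moshkovitz--Zhu). There is therefore no proof of this statement anywhere in the paper against which to compare.

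Your proposal also targets the wrong statement. You argue only about trilinear forms and slice rank, aiming for $\srk(f) \le 5\ark(f)+O(\log_q\ark(f))+O(1)$ --- that is Theorem 1, not the conjecture. The conjecture demands a bound $\prk(f)\le C_d\cdot\ark(f)$ for multilinear forms of \emph{every} arity $d$, with no additive term; a bound with an additive $+29$ does not even settle the $d=3$ case, since over small fields there are nonzero forms with $\ark(f)$ arbitrarily close to $0$, so the ratio $\srk(f)/\ark(f)$ is unbounded under your proposed estimate. Nothing in the proposal addresses $d>3$ at all, and the inductive complexity of partition rank (splitting a $d$-set into two nonempty parts in all possible ways) is precisely what makes the higher-arity problem hard and is not touched by your peeling scheme.

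Finally, even read as a proof sketch of Theorem 1, the central step is missing: you need that subtracting a well-chosen slice of partition rank $\le 5$ from $g_t$ decreases $\ark$ by at least $1$, and you concede yourself that naive bounds such as $|\ark(g)-\ark(g-s)|\le \prk(s)$ can let $\ark$ \emph{increase}. There is no known mechanism by which one can peel slices and guarantee a monotone drop in analytic rank, and that is the entire content you would need to supply. The paper's actual route around this is quite different and avoids peeling entirely: it uses the averaging identity $\E_{(u,v)\in Z} q^{\rk f[u]} = q^{\ark(f)}$ (note: expectation over the zero set $Z$, not over all of $U$ as in your identity) to locate a single codimension-$(r+1)$ subspace $U'=Z(v_0)$ on which $\rk f[u]$ is small with probability close to $1$, and then invokes the Haramaty--Shpilka lemma, proved by induction on the rank threshold $t$ rather than by any induction on $\ark$, to deduce $\srk(f\restriction_{U'\times V\times W})\le 4t$. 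No iterated subtraction and no tracking of how $\ark$ evolves is needed.
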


The current best bound in this direction is 
$$\prk(f) \le C_d\cdot \ark(f) \cdot (\log_q (1+\ark(f))+1),$$
due to Moshkovitz-Zhu \cite{MZ}.

\section{Proof of theorem \ref{main}}

Let $r=\ark(f).$ For $u\in U$ we write $f[u]$ for the bilinear form $f[u]:V\times W\to \k $ obtained by fixing the $U$ coordinate. For $v\in V$ we write $f\langle v\rangle$ for the bilinear form $f\langle v\rangle: U\times W\to \k$ obtained by fixing the $V$ coordinate. For a finite set $X,$ denote $\E_{x\in X} = \frac{1}{|X|} \sum_{x\in X}.$ Our first two lemmas are inspired by a recent observation of Moshkovitz-Zhu \cite{MZ}.
\begin{lemma}
    \[
    \E_{(u,v)\in Z} q^{\rk f[u]} = \E_{(u,v)\in Z} q^{\rk f\langle v\rangle} = q^r.
    \]
\end{lemma}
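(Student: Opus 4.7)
The plan is a straightforward fiber-over-$U$ double-counting argument. First I would observe that for each $u \in U$, the fiber $\{v \in V : (u,v) \in Z\}$ is by definition the left kernel of the bilinear form $f[u]$, and so has cardinality $|V| \cdot q^{-\rk f[u]}$. Summing this over $u$ gives a useful repackaging of the analytic rank,
\[
|Z| \;=\; \sum_{u \in U} |V|\cdot q^{-\rk f[u]} \;=\; |U|\cdot |V| \cdot q^{-r}.
\]

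Next I would use the same fiber decomposition to compute the expectation directly. Splitting by the $u$-coordinate and inserting the fiber size,
\[
\E_{(u,v)\in Z} q^{\rk f[u]} \;=\; \frac{1}{|Z|}\sum_{u\in U} \bigl(|V|\cdot q^{-\rk f[u]}\bigr)\cdot q^{\rk f[u]} \;=\; \frac{|U|\cdot |V|}{|Z|} \;=\; q^r,
\]
where the two $q^{\pm\rk f[u]}$ factors cancel on each fiber and what remains is the trivial count $|U|\cdot |V|$. The second equality in the lemma then follows by the symmetric fiber-over-$V$ argument: for each $v\in V$ the set $\{u : (u,v)\in Z\}$ is the left kernel of $f\langle v\rangle$ and has size $|U|\cdot q^{-\rk f\langle v\rangle}$, and the same cancellation yields $q^r$.

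There is really no serious obstacle; the content of the lemma is exactly that weighting each slice by the exponential of its rank perfectly compensates for the variable fiber sizes of $Z$. The only point to verify carefully is that the defining condition $f(u,v,\cdot)\equiv 0$ for $(u,v)\in Z$ translates to $v$ lying in the left kernel of $f[u]$ (resp.\ $u$ in the left kernel of $f\langle v\rangle$), which is immediate from the definition of $f[u]$ and $f\langle v\rangle$.
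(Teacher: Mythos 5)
Your proof is correct and is essentially the same as the paper's: both proceed by decomposing the uniform measure on $Z$ along its $u$-fibers, observing that the fiber over $u$ has size $|V|\cdot q^{-\rk f[u]}$, and noting that this cancels the weight $q^{\rk f[u]}$ to leave $|U|\cdot|V|/|Z| = q^r$. The paper phrases the fiber size as a marginal probability mass function $p(u)$, but the computation is identical.
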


\begin{proof}
    A straightforward computation. Let
    $$p(u) = \frac{|\{v\in V:(u,v)\in Z\}|}{|Z|} = q^{-\rk f[u]} \frac{|V|}{|Z|}$$
    be the marginal probability mass function for $u\in U.$ Then 
    \begin{align*}
        \E_{(u,v)\in Z} q^{\rk f[u]} &=  \sum_u p(u)\cdot  q^{\rk f[u]} \\
        &= \sum_u \frac{|V|}{|Z|} = \frac{|U|\cdot|V|}{|Z|} = q^r.  
    \end{align*}
    The proof for $\rk f\langle v\rangle$ is identical.
\end{proof}

This allows us to find a large subspace $U'\subset U$ where $\rk f[u]$ is almost surely small.

\begin{lemma}\label{subspace}
    There exists a subspace $U'\subset U$ with $\codim U' \le r+1$ and 
    \[
    \P_{u\in U'} (\rk f[u] > r+s) < q^{1-s}
    \]
    for every $s>0.$ The linear forms defining $U'$ are obtained from $f$ by fixing coordinates.
\end{lemma}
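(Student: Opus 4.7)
The plan is to take $U'$ to be a well-chosen \emph{slice} of $Z$. For each $v\in V$, let
\[
Z_v := \{u\in U : f(u,v,\cdot)\equiv 0\},
\]
so that $Z=\bigsqcup_v Z_v\times\{v\}$. Then $Z_v$ is a linear subspace of $U$ cut out by the linear forms $u\mapsto f(u,v,w)$ as $w$ ranges over a basis of $W$; these are obtained from $f$ by fixing the $V$- and $W$-coordinates, so the structural requirement of the lemma holds for free. Moreover $\codim Z_v=\rk f\langle v\rangle$ and $|Z_v|=q^{\dim U-\rk f\langle v\rangle}$.

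The first step is to repackage the two identities of the previous lemma as expectations over $V$ with respect to the probability distribution $p_v := |Z_v|/|Z|$. Using the disjoint decomposition of $Z$, they become
\[
\sum_v p_v\, q^{\rk f\langle v\rangle} = q^r \quad\text{and}\quad \sum_v p_v \cdot \E_{u\in Z_v}\,q^{\rk f[u]} = q^r.
\]
The second step is a union bound via Markov. Setting
\[
A := \{v : \rk f\langle v\rangle > r+1\}, \qquad B := \bigl\{v : \E_{u\in Z_v} q^{\rk f[u]} > q^{r+1}\bigr\},
\]
the strict form of Markov applied to each expectation gives $p(A)<q^{-1}$ and $p(B)<q^{-1}$, hence $p(A\cup B)<2/q\le 1$ (strict even for $q=2$, since each piece is strictly below $1/2$). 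Pick any $v$ with $p_v>0$ outside $A\cup B$ and set $U':=Z_v$: then $\codim U'=\rk f\langle v\rangle\le r+1$ and $\E_{u\in U'} q^{\rk f[u]}\le q^{r+1}$.

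The third step is one more application of Markov, now inside $U'$, to convert the expectation bound into the promised tail bound. For any $s>0$,
\[
\P_{u\in U'}(\rk f[u]>r+s) \le \frac{\E_{u\in U'} q^{\rk f[u]}}{q^{r+s}} \le q^{1-s},
\]
and the inequality is strict because $0\in U'$ has $q^{\rk f[0]}=1<q^{r+s}$, so the event $\{q^{\rk f[u]}<q^{r+s}\}$ carries positive mass and Markov is strict.

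No serious obstacle arises here: once one guesses that the correct candidate for $U'$ is the slice $Z_v$ (whose defining linear forms are tautologically obtained by fixing coordinates of $f$), the whole argument reduces to three applications of Markov's inequality. The only point requiring mild care is the $q=2$ boundary case in the union bound, which is handled by retaining strict inequalities throughout.
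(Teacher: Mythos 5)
Your proof is correct and follows essentially the same path as the paper: both reduce to the identities of the preceding lemma, both take $U'=Z(v_0)$ for a judiciously chosen $v_0$, and both finish with Markov inside $U'$. The only cosmetic difference is in locating $v_0$: the paper applies a single averaging/pigeonhole to $\E_{(u,v)\in Z}\bigl(q^{\rk f[u]}+q^{\rk f\langle v\rangle}\bigr)=2q^r\le q^{r+1}$, which sidesteps the union bound and the attendant care at $q=2$ that you correctly note and handle.
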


\begin{proof}
     By linearity of expectation,
    \[
    \E_{(u,v)\in Z} (q^{\rk f[u]}+q^{\rk f\langle v\rangle}) = 2q^r \le q^{r+1}.
    \]
    Therefore, there must exist some $v_0\in V$ with
    \[
    q^{\rk f\langle v_0\rangle}+\E_{u\in Z(v_0)}  q^{\rk f[u]} \le q^{r+1},
    \]
    where $Z(v_0) = \{u\in U: (u,v_0)\in Z\}.$ Choosing $U' = Z(v_0),$ we have \linebreak $\codim U' = \rk f\langle v_0\rangle \le r+1$ and 
    \[
    \P_{u\in U'} (\rk f[u] > r+s) = \P_{u\in U'} (q^{\rk f[u]} > q^{r+s}) < q^{1-s} 
    \]
    by Markov's inequality.
\end{proof}

The final ingredient is a lemma of Shpilka-Haramaty \cite{HS} regarding subspaces of bilinear forms of bounded rank. We include the proof for the reader's convenience.

\begin{lemma}\label{low-rk-der}
    If $g:U\times V\times W\to k$ is a trilinear form with
    $$\P_{u\in U} (\rk g[u] > t) < \frac{q-1}{2qt}$$ for some positive integer $t$ then $\srk (g) \le 4t.$ Moreover, the linear forms in the slice rank decomposition are obtained by fixing coordinates of $g.$
    \end{lemma}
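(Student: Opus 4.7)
Let $U_0 = \{u \in U : \rk g[u] \le t\}$, which by hypothesis has density $|U_0|/|U| > 1 - \tfrac{q-1}{2qt}$; in particular $U_0$ is nonempty. The strategy is to extract the $4t$ fixed-coordinate linear forms from a single base point $u^* \in U_0$ and use the rank-$\le t$ structure of $g[u^*]$ to produce a global slice rank decomposition of the shape ``$2t$ terms with linear forms in $v$'' plus ``$2t$ terms with linear forms in $w$.''

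First I would pick $u^* \in U_0$ with $\rk g[u^*] =: s \le t$ and choose $v_1^*, \ldots, v_s^* \in V$ and $w_1^*, \ldots, w_s^* \in W$ so that the pivot matrix $A = (g(u^*, v_i^*, w_j^*))_{i,j=1}^s$ is invertible. The skeleton (CUR) decomposition of the rank-$s$ bilinear form $g[u^*]$ reads
\[
g(u^*, v, w) \;=\; \sum_{i,j=1}^s g(u^*, v, w_j^*)\,(A^{-1})_{ji}\,g(u^*, v_i^*, w).
\]
The key algebraic observation is that by linearity of $g$ in each of its slots, a combination $\sum_i c_i\, g(u^*, v_i^*, w)$ equals $g(u^*, \sum_i c_i v_i^*, w)$, which is itself a fixed-coordinate linear form. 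Absorbing $A^{-1}$ in this way yields fixed-coordinate linear forms $\gamma_i(w) = g(u^*, \widetilde v_i, w)$ and $\xi_j(v) = g(u^*, v, \widetilde w_j)$ (for appropriate $\widetilde v_i \in V$, $\widetilde w_j \in W$) in terms of which $g[u^*]$ decomposes into $s$ rank-one pieces.

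The bulk of the work is to extend the skeleton identity from $u^*$ to all $u \in U$ within $\le 4t$ slice-rank terms. The target form is
\[
g(u, v, w) \;=\; \sum_{i=1}^{2t} \gamma_i(w)\,m_i(u, v) \;+\; \sum_{j=1}^{2t} \xi_j(v)\,k_j(u, w),
\]
with bilinear $m_i, k_j$ to be determined, where the $\gamma_i, \xi_j$ arise from (possibly enlarging the base data to) $2t$ choices of $\widetilde v_i$ and $\widetilde w_j$. Equivalently, this is an assertion that for \emph{every} $u \in U$ the bilinear form $g[u]$ admits a splitting $g[u] = B_1 + B_2$ where $B_1$ has column space contained in $\mathrm{span}\{\gamma_i\} \subseteq V^*$ and $B_2$ has row space contained in $\mathrm{span}\{\xi_j\} \subseteq W^*$; the coefficients $m_i, k_j$ can then be read off by linear algebra, their bilinearity being forced by the linearity of $g$ in $u$.

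The main obstacle is establishing this universal splitting. A priori $g[u]$ for $u \ne u^*$ may have column and row spaces unrelated to those of $g[u^*]$, so the density hypothesis must do significant work here: the threshold $\tfrac{q-1}{2qt}$ has to be exactly sharp enough that a Shpilka-Haramaty-style counting argument (in the spirit of Flanders-type results on spaces of low-rank matrices) forces the column and row spaces of most $g[u]$'s to align with those of $g[u^*]$, and the enlargement from $s$ to $2t$ absorbs the remaining exceptional directions. Verifying this quantitative compatibility---and ensuring the enlargement stays within $2t$ in each direction---is the crux of the proof.
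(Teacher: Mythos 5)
Your setup is sound and partially overlaps with the paper's: you correctly isolate the dense set $U_0$ of low-rank slices, pick a base point $u^* \in U_0$, and observe that the rank-$\le t$ structure of $g[u^*]$ yields fixed-coordinate linear forms via the skeleton decomposition (the paper does the same through a rank decomposition $g[u_0] = \sum_{i=1}^t \alpha_i(v)\beta_i(w)$). But the proposal stops at precisely the point where the proof actually begins. You propose to find $2t$ forms $\gamma_i$ on $W$ and $2t$ forms $\xi_j$ on $V$ such that \emph{every} $g[u]$ splits as $B_1 + B_2$ with $B_1$ having column space in $\mathrm{span}\{\gamma_i\}$ and $B_2$ row space in $\mathrm{span}\{\xi_j\}$, and you acknowledge that establishing this ``universal splitting'' is ``the crux of the proof'' --- but you give no mechanism for it, no way to choose the enlargement from $s$ to $2t$, and no argument that $2t$ in each direction suffices. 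This is the entire content of the lemma; as written, the proposal is a restatement of the goal, not a proof.

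The paper closes this gap with an induction on $t$ that your one-shot framework does not capture. After restricting to $V_0 \times W_0$ (the common kernels of the $\alpha_i$ and $\beta_i$ extracted from $g[u_0]$), one does \emph{not} get zero or get alignment with $u_0$'s row/column spaces; instead one proves the quantitative halving estimate $\rk \tilde g[u] \le t/2$ for all $u \in A \cap (A - u_0)$, using that both $\rk g[u] \le t$ and $\rk g[u + u_0] \le t$ together with a careful bookkeeping of rank contributions (equation \eqref{rk-q} and the chain $t \ge s + \rk(\sum v_{s+i}w_{s+i} + q) \ge s + t - \rk q \ge 2s$). The factor-$4$ bound then comes from the geometric series $t + t/2 + t/4 + \cdots < 2t$ linear forms on each of $V$ and $W$, not from an a~priori choice of $2t$ forms at the top level. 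The base case $t=1$ also needs its own argument (that $\tilde g$ vanishes on a set of density $> 1/q$, hence vanishes identically). None of the halving mechanism, the rank-addition estimate, or the recursion appears in your proposal, so the gap is genuine and is not merely a matter of carrying out routine verification.
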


    \begin{proof}
         Let $A = \{u\in U: \rk g[u] \le t \}.$ Our assumption is that $\frac{|A|}{|U|} > 1-\frac{q-1}{2qt}.$ The proof proceeds by induction on $t.$
         
         \textbf{Base case $t=1:$} Suppose first that $g[u] = 0$ for all $u\in A.$ Since $|A| > |U|/q$ and $u\mapsto g[u]$ is a linear map, we deduce that $g[u] = 0$ for all $u\in U,$ so $g = 0.$  We may henceforth assume that there is some $u_0\in A$ with $\rk g[u_0] = 1$. Writing $g[u_0] = \alpha(v)\beta(w),$ we claim that for all $u\in A\cap (A-u_0)$ the bilinear form $g[u]$ is contained in the ideal $(\alpha,\beta).$  Indeed, if $g[u] = \gamma(v)\delta(w)$ with $\gamma \not\in \text{span}(\alpha)$ and $\delta\not\in\text{span}(\beta)$ then 
         \[
         \rk g[u_0+u] = \rk(\alpha(v)\beta(w)+\gamma(v)\delta(w)) = 2,
         \]
         contradicting the fact that $u_0+u \in A.$ Therefore, if $V_0 = \{v: \alpha(v) = 0\}$ and $W_0 = \{w: \beta(w) = 0\}$ and $\tilde g = g\restriction_{U\times V_0\times W_0},$ we get that $\tilde g[u] = 0$ whenever $u\in A\cap (A-u_0),$  a set of density greater than $ 1- 2\cdot \frac{q-1}{2q} = \frac{1}{q}.$ By the linearity of $u\mapsto \tilde g[u]$ we deduce $\tilde g = 0$, so $g\in(\alpha,\beta)\implies \srk(g) \le 2.$

         \textbf{Inductive step:} We may assume that there is some $u_0\in A$ with $\rk g[u_0] = t,$ otherwise we're done by the inductive hypothesis. Write $g[u_0] = \sum_{i=1}^t \alpha_i(v)\beta_i(w)$ and set $V_0 = \{v:\alpha_i(v) = 0\ \forall i\in[t]\},$ $W_0 = \{w:\beta_i(w) = 0\ \forall i\in[t]\}.$ We claim that $\tilde g = g\restriction_{U\times V_0\times W_0}$ satisfies $\rk \tilde g[u] \le t/2 $ for all $u\in A\cap (A-u_0).$ This will complete the proof because this set of $u$'s has density greater than $ 1-2\cdot \frac{q-1}{2qt} \ge 1- \frac{q-1}{2q\lfloor t/2\rfloor}, $ so the inductive hypothesis implies 
         \[
         \srk (\tilde g) \le 4\lfloor t/2\rfloor \le 2t \implies \srk(g) \le 4t.
         \]
        To prove the claim about $\rk \tilde g[u],$ suppose it has rank $s$ and write $\tilde g[u] = \sum_{i=1}^s \tilde\gamma_i(v) \tilde \delta_i(w).$ Let $\gamma_i,\delta_i$ be linear forms on $V,W$ which restrict to $\tilde\gamma_i,\tilde\delta_i$, respectively.  Note that our assumptions on $\rk g[u_0]$ and $\rk \tilde g[u]$ imply that  both $\alpha_i,\gamma_i$ and  $\beta_i,\delta_i$ are linearly independent. Choosing bases, we may identify $V=\k^n,W=\k^m$ and 
        $$ \gamma_i(v) = v_i,\ \delta_i(w) = w_i,\  \alpha_i(v) = v_{s+i},\ \beta_i(w) = w_{s+i}.$$ This yields an equation
         \[
         g[u] = \sum_{i=1}^s v_iw_i +\sum_{j=1}^t  v_{s+j} \phi_j (w)+\sum_{j=1}^t  w_{s+j} \tau_j (v).
         \]
         Decomposing $$\phi_j(w) = \sum_{i=1}^s a_{i,j} w_i + \phi'(w_{s+1},\ldots,w_m),\ \tau_j(v) = \sum_{i=1}^s b_{i,j} v_i + \tau'(v_{s+1},\ldots,v_n),$$ 
         we get 
         \[
         g[u] = \sum_{i=1}^s  \gamma'_i(v) \delta'_i(w) +q(v_{s+1},\ldots,v_n,w_{s+1},\ldots,w_m),
         \]
         where 
         \[
         \gamma'_i(v) = v_i+\sum_{j=1}^t a_{i,j} v_{s+j},\ \delta'_i(w)= w_i+\sum_{j=1}^t b_{i,j} w_{s+j}.
         \]
         The collection of linear forms $\gamma'_1,\ldots,\gamma'_s,v_{s+1},\ldots,v_n$ spans $V^*$ and so is linearly independent, likewise for $\delta'_1,\ldots,\delta'_s,w_{s+1},\ldots,w_m.$ This means that 
         \begin{equation}\label{rk-q}
             t\ge \rk g[u] = s+ \rk(q).
         \end{equation}
         Since $u+u_0\in A,$ we get 
         \begin{align*}
             t &\ge \rk(g[u]+g[u_0]) = \rk \left(\sum_{i=1}^s  \gamma'_i(v) \delta'_i(w)+\sum_{i=1}^t v_{s+i} w_{s+i}+q\right) \\
             &= s+\rk\left(\sum_{i=1}^t v_{s+i} w_{s+i}+q\right) \ge s+t-\rk(q) \ge 2s
         \end{align*}
         where the last inequality used \eqref{rk-q}. This proves that $\rk\tilde g[u] = s\le t/2,$ completing the proof of the lemma.
    \end{proof}
    Now we are ready to put everything together.

    \begin{proof}[Proof of theorem \ref{main}]
    Let $U'$ be the subspace of lemma \ref{subspace} and let \linebreak $g = f\restriction_{U'\times V\times W}.$ For $s = \lceil\log_q (r+1)\rceil+6,$ we get 
    \begin{align*}
        \P_{u\in U'} (\rk f[u] > r+s) &< q^{1-s} \le \frac{1}{q^5(r+1)}  \\
        &< \frac{1}{4(r+s)} \le \frac{q-1}{2q}\cdot \frac{1}{r+s},
    \end{align*}
    where the inequality at the start of the second line follows from 
    \[
    q^5(r+1) > 4r+8r+28 > 4(r+\lceil\log_q (r+1)\rceil+6).
    \]
    By lemma \ref{low-rk-der}, we deduce that $\srk(g) \le 4(r+s) $ and so
    \[
    \srk(f) \le r+1+4(r+s) \le 5r+4\log_q(r+1)+29.
    \]
    \end{proof}

    The proof of theorem \ref{main} is asymmetric in its treatment of $U,V,W.$ While only $r+1$ $U$-linear forms are needed, we require $\approx 2r$ linear forms on each of $V,W.$ One might hope that a more symmetric treatment would require only $r+1$ linear forms in each of $U,V,W.$

    \begin{question}
        Is there an elementary proof that $\srk(f) \le 3\cdot \ark(f)+o(\ark(f))$? 
    \end{question}





\section*{Acknowledgments} 
The author would like to thank Daniel Altman, Guy Moshkovitz and Tamar Ziegler for helpful discussions. He is also grateful to the anonymous reviewers for their careful reading and useful suggestions.

\bibliographystyle{amsplain}


\begin{dajauthors}
\begin{authorinfo}[AL]
  Amichai Lampert\\
  NSF Research Fellow and Postdoctoral Assistant Professor \\
  University of Michigan\\
  amichai@umich.edu \\
\url{https://sites.google.com/umich.edu/amichai/}
\end{authorinfo}
\end{dajauthors}

\end{document}